\newcommand{\R}{\mathbb{R}}
\newtheorem{theorem}{Theorem}[section]
\newtheorem{lemma}[theorem]{Lemma}
\def\be#1{\begin{equation}\label{#1}}
\def\ee{\end{equation}}
\def\e{{\rm e}}
\def\d{{\rm d}}
\title{Minimal periods of semilinear evolution equations with Lipschitz nonlinearity
  revisited}
\author{James C. Robinson\footnote{e-mail address:
    j.c.robinson@warwick.ac.uk}
  \qquad Alejandro Vidal-L\'opez\footnote{e-mail address: 
a.vidal-lopez@warwick.ac.uk}\\ \\
  {\small Mathematics Institute, University of Warwick, Coventry, CV4
    7AL. UK}}
\begin{document}
\maketitle

\begin{abstract}
  We show that when $A$ is a self-adjoint sectorial operator on a Hilbert space, for
  $0\le\alpha<1$ there exists a constant $K_\alpha$, depending only on $\alpha$, such that if $f:D(A^\alpha)\to X$ satisfies
$$
\|f(u)-f(v)\|_X\le L\|A^\alpha(u-v)\|_X
$$
then any periodic orbit of the equation $\dot u=-Au+f(u)$ has period at least
$K_\alpha L^{-1/(1-\alpha)}$. This generalises our previous result (J.\ Diff.\
Eq.\ 220 (2006), 396--406) which was restricted to $0\le\alpha\le 1/2$ and $A^{-1}$ compact.
\end{abstract}

\section{Introduction}

In 1969 Yorke proved a striking result providing lower bounds on the period of any periodic orbit of a Lipschitz ordinary differential equation. He showed that the period $T$ of any periodic orbit of
$$
\dot x=f(x),\ x\in\R^n,\qquad\mbox{with}\qquad|f(x)-f(y)|\le L|x-y|
$$
must satisfy $T\ge 2\pi/L$. This result was extended to ODEs on Hilbert spaces by Busenberg et al.\ (1986), who also proved the lower bound $T\ge 6/L$ in Banach spaces. For the results in this generality these two bounds are known to be sharp; for the Hilbert space bound one need only consider $(\dot x,\dot y)=(y,-x)$, while in the Banach space case an example is given by Busenberg et al.\ (1989). (Note, however, that the optimal bound in concrete Banach spaces, e.g. $(\R^n,\|\cdot\|_{\ell^p})$, is not known, despite some work in this direction, e.g.\ Zevin (2008), Nieuewenhuis \& Robinson (2012). Suggestively, Zevin (2012) has shown that if $Df(x)f(x)$ has Lipschitz constant $L$ in $(\R^n,\ell^p)$ then the period is $2\pi/L$ independent of $p$.).

In a previous paper (Robinson \& Vidal-L\'opez, 2006) inspired by work of Kukavica (1994) for the Navier--Stokes equations, we considered one natural analogue of this problem in the realm of partial differential equations, namely periodic orbits for semilinear evolution equations of the form
\be{eq:PDE}
\frac{\d u}{\d t}=-Au+f(u),
\ee
where $A$ was a positive self-adjoint operator with compact inverse, and $f$ was Lipschitz from $D(A^\alpha)$ into $H$ for some $0\le\alpha\le1/2$. In this case we showed that any periodic orbit must have period $T$ bounded below according to
\be{Tbound}
T\ge K_\alpha L^{-1/(1-\alpha)},
\ee
where $K_\alpha$ depends only on $\alpha$.

The bound in (\ref{Tbound}) gives no indication that the limitation of our analysis, namely $0\le\alpha\le1/2$, is in any way necessary. Indeed, we show in this paper that we can extend this to the full range $0\le\alpha<1$ and drop the requirement that $A$ has a compact inverse. This is the standard setting in which one can prove local existence and uniqueness results for (\ref{eq:PDE}), see Henry's 1981 monograph, for example.

In Section \ref{ODE} we recall the elegant proof of the bound $T\ge 2\pi/L$ in Hilbert spaces due to Busenberg et al.\ (1986). In Section \ref{PDE} when then prove the lower bound $T\ge K_\alpha L^{-1/(1-\alpha)}$ for (\ref{eq:PDE}). We indicate various applications in Section \ref{applications}.

\section{Lipschitz ordinary differential equations}\label{ODE}

First we give the very short and elegant proof due to Busenberg et al.\ (1986) of Yorke's lower bound on the period for Lipschitz differential equations in Hilbert spaces. The proof uses `Wirtinger's inequality', which is just the Poincar\'e inequality for functions defined on an interval.

\begin{lemma}\label{Wirtinger}
  Let $H$ be a Hilbert space with norm $\|\cdot\|$. If $f\in W^{1,2}(0,2\pi;H)$ and $\int_0^{2\pi}f(t)\,\d t=0$ then
  $$
  \int_0^{2\pi}\|f(t)\|^2\,\d t\le\int_0^{2\pi}\|\dot f(t)\|^2\,\d t.
  $$
\end{lemma}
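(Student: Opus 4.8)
The plan is to prove Wirtinger's inequality using Fourier series expansion in the Hilbert space $H$. The key observation is that this is fundamentally a spectral/Fourier problem: the functions $\{e^{int}/\sqrt{2\pi}\}_{n\in\mathbb{Z}}$ form an orthonormal basis for $L^2(0,2\pi;\mathbb{C})$, and I can expand any $H$-valued function in this basis with coefficients living in $H$.

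Let me sketch the approach. First I would write $f(t) = \sum_{n\in\mathbb{Z}} c_n \mathrm{e}^{int}$ where the Fourier coefficients $c_n = \frac{1}{2\pi}\int_0^{2\pi} f(t)\mathrm{e}^{-int}\,\mathrm{d}t \in H$ (complexifying $H$ if necessary, or treating real and imaginary parts separately). The hypothesis $\int_0^{2\pi} f(t)\,\mathrm{d}t = 0$ immediately gives $c_0 = 0$, which is the crucial constraint that makes the inequality work. Since $f \in W^{1,2}(0,2\pi;H)$, its derivative has Fourier coefficients $in\,c_n$, and by Parseval's identity (the Hilbert-space-valued version, applied norm-wise) I get
$$
\int_0^{2\pi}\|f(t)\|^2\,\mathrm{d}t = 2\pi\sum_{n\in\mathbb{Z}}\|c_n\|^2, \qquad \int_0^{2\pi}\|\dot f(t)\|^2\,\mathrm{d}t = 2\pi\sum_{n\in\mathbb{Z}}n^2\|c_n\|^2.
$$

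The inequality then reduces to the elementary term-by-term comparison: since $c_0 = 0$, the first sum only runs over $n \neq 0$, where $n^2 \geq 1$, so $\sum_{n\neq 0}\|c_n\|^2 \leq \sum_{n\neq 0}n^2\|c_n\|^2$, which is exactly what is required. An alternative, entirely self-contained route avoids Fourier series altogether: one periodically extends $f$, sets $g = \dot f$, and uses the fundamental theorem of calculus together with the mean-zero condition; but the Fourier approach is cleaner and makes the sharp constant (and the role of the zero-mean hypothesis) transparent.

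The main obstacle I anticipate is justifying the Hilbert-space-valued Parseval identity and the differentiation of the Fourier series rigorously, rather than the inequality itself, which is trivial once the spectral picture is in place. Specifically, I need to confirm that for $f \in W^{1,2}(0,2\pi;H)$ the Bochner integrals defining $c_n$ make sense, that Parseval holds in the vector-valued setting (this follows from applying the scalar theory to the functions $t \mapsto \langle f(t), \phi\rangle$ for $\phi$ in a dense subset, or directly from completeness of the exponential basis), and that weak differentiability in the Sobolev sense correctly yields $\widehat{\dot f}(n) = in\,c_n$ via integration by parts using periodicity of the $W^{1,2}$ representative. Once these technical points are dispatched, the proof concludes immediately.
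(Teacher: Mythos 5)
Your proof is correct in substance and rests on the same mechanism as the paper's --- Fourier analysis in the time variable, with the mean-zero hypothesis killing the $n=0$ mode and $n^2\ge 1$ doing the rest --- but you organise the reduction differently. The paper decomposes in space first: it proves the scalar one-dimensional inequality by Fourier series, extends to $\R^n$ componentwise, and handles a general Hilbert space by choosing a countable orthonormal set $\{e_k\}$ whose span contains the (separable) range of $f$, applying the scalar inequality to each function $t\mapsto\langle f(t),e_k\rangle$ and summing, so that only pointwise Parseval in $H$ is needed. You instead decompose in time first, expanding $f$ as a vector-valued Fourier series with Bochner coefficients $c_n\in H$ and invoking the Parseval identity in $L^2(0,2\pi;H)$. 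The two computations are transposes of one another --- both amount to comparing $\sum_{n,k}|\langle c_n,e_k\rangle|^2$ with $\sum_{n,k}n^2|\langle c_n,e_k\rangle|^2$ --- and yours is more streamlined and keeps the sharp constant visible, at the price of the vector-valued Parseval identity and Bochner-integral bookkeeping that you correctly identify as the real technical burden; the paper's componentwise route needs nothing beyond the scalar theorem.

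One caveat, which applies equally to the paper's own sketch: the integration by parts giving $\widehat{\dot f}(n)=in\,c_n$ requires $f(0)=f(2\pi)$, and this does \emph{not} follow from $f\in W^{1,2}(0,2\pi;H)$ together with zero mean --- your phrase ``periodicity of the $W^{1,2}$ representative'' makes it sound automatic, but it must be assumed. Without it the stated inequality is actually false: $f(t)=\cos(t/2)$ has zero mean on $(0,2\pi)$, yet $\int_0^{2\pi}|f(t)|^2\,\d t=\pi$ while $\int_0^{2\pi}|\dot f(t)|^2\,\d t=\pi/4$. In the only place the lemma is used, $f$ is a difference $x(\cdot+h)-x(\cdot)$ along a periodic orbit, so periodicity is available where it matters; but strictly speaking it should be added to the hypotheses, in your write-up just as in the paper's.
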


\noindent The proof of the 1D inequality is straightforward using Fourier series. The inequality in $\R^n$ follows by applying the 1D inequality to each component of $f$, and the proof in a Hilbert space follows the same lines using an countable orthonormal set whose span contains $\cup_{t\in[0,2\pi]}f(t)$.

It is then easy to prove the following theorem.

\begin{theorem}[Yorke, 1969; Busenberg et al., 1986]
 Let $H$ be a Hilbert space. Any periodic orbit of the equation $\dot x=f(x)$, where $f:H\to H$ has Lipschitz constant $L$, has period $T\ge2\pi/L$.
\end{theorem}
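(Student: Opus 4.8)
The plan is to exploit Wirtinger's inequality (Lemma \ref{Wirtinger}) together with the Lipschitz bound on $f$. Suppose $x(t)$ is a periodic orbit of period $T$. By rescaling time I would set $s = 2\pi t/T$, so that the reparametrised orbit $y(s) = x(Ts/2\pi)$ is $2\pi$-periodic and satisfies $\dot y = (T/2\pi)f(y)$. This puts the problem on the interval $(0,2\pi)$ where Lemma \ref{Wirtinger} applies directly.

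The key step is to arrange for the mean-zero hypothesis of the lemma. The natural candidate to which one applies Wirtinger is not $y$ itself but its derivative $\dot y$: since $y$ is periodic, $\int_0^{2\pi}\dot y(s)\,\d s = y(2\pi)-y(0)=0$, so $\dot y$ has mean zero automatically. Applying Lemma \ref{Wirtinger} to $f = \dot y$ (which lies in $W^{1,2}(0,2\pi;H)$ provided the orbit is smooth enough, a point I would note requires the solution to be $C^2$ in time, obtainable from the regularity of periodic solutions) gives
$$
\int_0^{2\pi}\|\dot y(s)\|^2\,\d s \le \int_0^{2\pi}\|\ddot y(s)\|^2\,\d s.
$$
Now I would use the equation: $\ddot y = (T/2\pi)\frac{\d}{\d s}f(y) $, and here the Lipschitz property enters through $\|f(y(s_1))-f(y(s_2))\| \le L\|y(s_1)-y(s_2)\|$, which at the infinitesimal level yields $\|\ddot y\| \le (T/2\pi)L\|\dot y\|$ (interpreting the bound on the difference quotient of $f\circ y$ in terms of the difference quotient of $y$, then passing to the limit).

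Combining these two facts gives
$$
\int_0^{2\pi}\|\dot y(s)\|^2\,\d s \le \left(\frac{TL}{2\pi}\right)^2\int_0^{2\pi}\|\dot y(s)\|^2\,\d s.
$$
Since the orbit is genuinely periodic and nonconstant, $\int_0^{2\pi}\|\dot y\|^2\,\d s>0$, so I may divide through to obtain $1 \le (TL/2\pi)^2$, that is $T\ge 2\pi/L$, as required.

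The main obstacle I anticipate is justifying the infinitesimal Lipschitz estimate $\|\ddot y\|\le (TL/2\pi)\|\dot y\|$ rigorously. The clean way is to avoid differentiating $f\circ y$ outright and instead work with difference quotients: for the $2\pi$-periodic orbit one has, for any $h$, the translate $y(\cdot+h)$ which is again a solution, and the difference $w_h(s)=y(s+h)-y(s)$ satisfies a bound of the form $\|\dot w_h\|\le (TL/2\pi)\|w_h\|$ pointwise coming directly from the stated Lipschitz inequality, with no differentiability of $f$ needed. Applying Wirtinger to $w_h$ (which has mean zero by periodicity) and then letting $h\to0$ recovers the same conclusion while sidestepping any appeal to $C^2$ regularity or differentiability of the nonlinearity. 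I would present this difference-quotient version as the cleaner route, since it uses only the hypothesis $f$ is Lipschitz rather than any smoothness.
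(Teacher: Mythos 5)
Your final difference-quotient version is essentially the paper's own proof: the paper takes $y(t)=x(t+h)-x(t)$, observes that periodicity gives $\int_0^T y(t)\,\d t=0$, applies Wirtinger scaled to period $T$, and uses the Lipschitz bound on $\dot y(t)=f(x(t+h))-f(x(t))$ --- identical to your $w_h$ argument up to the cosmetic rescaling to $[0,2\pi]$. The only refinement worth noting is that no limit $h\to0$ is needed: for any fixed $0<h<T$ non-constancy of the orbit gives $w_h\not\equiv0$, so the inequality $\int\|w_h\|^2\le (TL/2\pi)^2\int\|w_h\|^2$ already yields $T\ge2\pi/L$ directly.
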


\begin{proof}
  Since $x(\cdot)$ is periodic, for any $h>0$ the function $y(t)=x(t+h)-x(t)$ satisfies $\int_0^T y(t)\,\d t=0$. So we can use Lemma \ref{Wirtinger}:
  \begin{align*}
  \int_0^T \|x(t+h)-x(t)\|^2\,\mathrm{d}t&\le\left(\frac{T}{2\pi}\right)^2
  \int_0^T\|\dot x(t+h)-\dot x(t)\|^2\,\d t\\
  &=\left(\frac{T}{2\pi}\right)^2\int_0^T\|f(x(t+h))-f(x(t))\|^2\,\d t\\
  &\le\left(\frac{T}{2\pi}\right)^2\int_0^T L^2\|x(t+h)-x(t)\|^2\,\d t.
  \end{align*}
  It follows that $LT\ge2\pi$ as claimed.
\end{proof}

\section{Lipschitz semilinear evolution equations}\label{PDE}

Before we prove our main theorem we first recall some basic properties of self-adjoint operators on a
Hilbert space (e.g, see Reed \& Simon Vol I, 1980).

A projection-valued measure $\{P_\Omega\}_{\Omega\in
   \mathcal{B}(\R)}$ is a family of projections defined on the Borel sets of $\R$
 such that
 \begin{enumerate}
 \item each $P_\Omega$ is an orthogonal projection;
 \item $P_\emptyset = 0$, $P_{(-\infty,\infty)} = I$;
 \item if $\Omega = \cup_{n\geq 1} \Omega_n$ with $\Omega_n \cap\Omega_m =
   \emptyset$ if $m\not=n$, then
   \begin{displaymath}
     P_\Omega = \lim_{N\to \infty} \sum_{n=1}^N P_{\Omega_n}
   \end{displaymath}
   in the strong sense; and
 \item $P_{\Omega_1} P_{\Omega_2} = P_{\Omega_1 \cap \Omega_2}$.
 \end{enumerate}

 If $A$ is a self-adjoint sectorial operator, possibly unbounded, on a Hilbert
 space $H$, then by the spectral theorem (see Theorem VIII.6 in Reed \& Simon
 Vol I) there exists a
 projection-valued measure $\{P_\Omega\}_{\Omega\in \mathcal{B}(\R)}$ such that
 for any real-valued function $g(\lambda)$ defined on $\R$,
 \begin{displaymath}
   (\varphi, g(A)\varphi) = \int_{-\infty}^{+\infty} g(\lambda)
\, \mathrm{d}(\varphi, P_\lambda \varphi)
 \end{displaymath}
for any
 $\varphi \in D_g =\{\psi\in H:\ \int_\R |g(\lambda)|^2 \, \mathrm{d}(\psi,
 P_\lambda \psi)< \infty\}$.
From this it follows  by polarisation that for any $\varphi, \psi \in D_g$,
\begin{displaymath}
     (\varphi, g(A)\psi) = \int_{-\infty}^{+\infty} g(\lambda)
\, \mathrm{d}(\varphi, P_\lambda \psi),
\end{displaymath}
and in particular
 \begin{displaymath}
    (\varphi, A \varphi) = \int_{-\infty}^{+\infty} \lambda
\, \mathrm{d}(\varphi, P_\lambda \varphi)
 \end{displaymath}
 for any $\varphi \in D(A) = \{\psi\in H:\ \int_\R
 \lambda^2 \, \mathrm{d}(\psi, P_\lambda \psi)< \infty\}$.

As a consequence we can define projection operators as in the following lemma, which are the key ingredient in our proof. The existence of such projections is clear when $A^{-1}$ is compact and $H$ has a basis consisting of the eigenfunctions of $A$ (by choosing an $n$ such that $\lambda_n\le\mu<\lambda_{n+1}$ and letting $P$ be the projection onto the eigenfunctions corresponding to the first $n$ eigenvalues), which was the case we considered in our previous paper.

 \begin{lemma}
\label{lemma:spectralDecomp}
Let $A$ be a self-adjoint sectorial operator on $H$. Given $\mu>0$,
define projections $P = P_{[0,\mu)}$ and $Q=P_{[\mu, +\infty)}$. Then for the
operators $A_P = AP = PA$ and $A_Q = A Q = Q A$ defined in $PH$ and $QH$
respectively,
   \begin{displaymath}
    \|A_P\|_{PH} \leq \mu \quad \textrm{and} \quad
     \|(I-\mathrm{e}^{-A_Q T})^{-1}\|_{PH} < (1-\mathrm{e}^{-\mu T})^{-1}.
   \end{displaymath}
 \end{lemma}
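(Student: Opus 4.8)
The plan is to work entirely within the spectral-theoretic framework just established, using the projection-valued measure $\{P_\Omega\}$. Both estimates are statements about functions of the self-adjoint operator $A$ applied on subspaces cut out by spectral projections, so I would reduce everything to elementary bounds on scalar functions $g(\lambda)$ integrated against the spectral measure.

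\medskip

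For the first bound, I would observe that $A_P = AP$ is represented spectrally by the function $g(\lambda)=\lambda\,\chi_{[0,\mu)}(\lambda)$, since $P=P_{[0,\mu)}$ localises the spectrum to $[0,\mu)$. For any $\varphi\in PH$ we have $\varphi=P\varphi$, so that
\begin{displaymath}
(\varphi,A_P\varphi)=\int_0^{\mu}\lambda\,\mathrm{d}(\varphi,P_\lambda\varphi)\le\mu\int_0^{\mu}\mathrm{d}(\varphi,P_\lambda\varphi)=\mu\|\varphi\|^2,
\end{displaymath}
using that the integrand is supported on $[0,\mu)$ where $\lambda<\mu$, and that the total mass of the measure is $\|P\varphi\|^2=\|\varphi\|^2$. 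Since $A_P$ is self-adjoint and bounded on $PH$, its operator norm equals its numerical radius, giving $\|A_P\|_{PH}\le\mu$.

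\medskip

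For the second bound I would apply the same idea to the function $h(\lambda)=(1-\mathrm{e}^{-\lambda T})^{-1}$ on the subspace $QH$, where the spectrum lives in $[\mu,+\infty)$. The key elementary fact is that $\lambda\mapsto(1-\mathrm{e}^{-\lambda T})^{-1}$ is strictly decreasing for $\lambda>0$, so on $[\mu,+\infty)$ it is bounded above by its value at $\lambda=\mu$, namely $(1-\mathrm{e}^{-\mu T})^{-1}$; as $\lambda\to+\infty$ the function decreases toward $1$, which is why the bound is strict. Spectrally this says $(I-\mathrm{e}^{-A_QT})^{-1}$ is represented by $h(\lambda)\chi_{[\mu,\infty)}(\lambda)$, and the same numerical-radius argument (the operator is self-adjoint on $QH$) yields the stated strict inequality. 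I would note that the subscript ``$PH$'' attached to this second norm appears to be a typo for ``$QH$'', since the operator acts on $QH$.

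\medskip

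I do not anticipate a genuine obstacle here: once the reduction to scalar multiplier bounds via the spectral theorem is in place, both estimates are immediate monotonicity facts for the functions $\lambda$ and $(1-\mathrm{e}^{-\lambda T})^{-1}$. The only point requiring mild care is justifying that $(I-\mathrm{e}^{-A_QT})$ is boundedly invertible on $QH$ at all — this needs $\mu>0$ so that $\mathrm{e}^{-\lambda T}$ is bounded away from $1$ on the spectrum of $A_Q$ — and confirming that the sectoriality hypothesis guarantees the spectrum is bounded below so that these scalar functions are well defined and bounded on the relevant spectral intervals.
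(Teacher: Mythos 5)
Your proposal is correct and follows essentially the same route as the paper: both reduce the two operator bounds to scalar estimates on spectral multiplier functions via the projection-valued measure. The differences are minor. For $\|A_P\|$ the paper applies the spectral theorem to $g(\lambda)=\lambda^2\chi_{[0,\mu)}$ and bounds $\|A_P\varphi\|^2=(\varphi,A^2\varphi)$ directly, whereas you bound the quadratic form $(\varphi,A_P\varphi)$ with $g(\lambda)=\lambda\chi_{[0,\mu)}$ and then invoke the equality of norm and numerical radius for bounded self-adjoint operators (legitimate here, since the form is also nonnegative, so its modulus is bounded by $\mu\|\varphi\|^2$). For the second bound the paper estimates the bilinear form $(\psi,(I-\mathrm{e}^{-A_QT})^{-1}\varphi)$ and takes suprema in $\psi$ and $\varphi$, where you again use the quadratic form; both rest on the same monotonicity of $(1-\mathrm{e}^{-\lambda T})^{-1}$. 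You are also right that the subscript $PH$ on the second norm in the statement is a typo for $QH$ (the paper's own proof writes $H_2$ there, another slip). One concrete flaw worth flagging: your justification of the \emph{strict} inequality is not valid. The supremum of the decreasing function $(1-\mathrm{e}^{-\lambda T})^{-1}$ over $[\mu,\infty)$ is attained at the left endpoint $\lambda=\mu$, so its decay towards $1$ as $\lambda\to\infty$ is irrelevant; indeed, if $\mu\in\sigma(A)$ the operator norm \emph{equals} $(1-\mathrm{e}^{-\mu T})^{-1}$ and strictness fails. Your argument (like the paper's own proof, which also concludes with $\le$ despite the $<$ in the lemma statement) establishes the non-strict bound, and that is all that is used in the proof of the main theorem, where this quantity is simply majorised by $\gamma=(1-\mathrm{e}^{-1/2})^{-1}$.
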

 \begin{proof}
   First, notice that $P H$ and $QH$ are invariant subspaces for $A_P = AP =
   PA$ and $A_Q = A Q = Q A$.

   Taking $g(\lambda) = \lambda^2 \chi_{[0,\mu)}$ we have $D_g = P_{[0, \mu)}H$
   and so, for $\varphi\in P_{[0, \mu)}H$,
 \begin{displaymath}
   \|A_P \varphi\|^2 = (\varphi,A^2\varphi)=\int_{(-\infty,\mu)}\lambda^2
   \,\mathrm{d}(\varphi, P_\lambda \varphi)
   \leq \mu^2 \|\varphi\|^2,
 \end{displaymath}
i.e.\ $\|A_P\| \leq \mu$.

 Taking now $g(\lambda) = (1-\mathrm{e}^{-\lambda T})^{-1}\chi_{[\mu,+\infty)}$
 we have, for $\psi,\varphi\in D_g= P_{[\mu, +\infty)} H$ (we are using that $g(A_Q)$
 is a bounded operator on $P_{[\mu, +\infty)}H$),
 \begin{align*}
   (\psi, (I-\mathrm{e}^{-A_QT})^{-1}\varphi) &=
   \int_{[\mu,+\infty)}(1-\mathrm{e}^{-\lambda T})^{-1}
   \,\mathrm{d}(\psi, P_\lambda \varphi )\\
   & \leq (1-\mathrm{e}^{-\mu T})^{-1}
   \int_{[\mu,+\infty)}\,\mathrm{d}(\psi, P_\lambda \varphi ) .
 \end{align*}
 Therefore, taking the supremum in $\psi$ and then in $\varphi$, it follows that
 \begin{displaymath}
 \|(I-\mathrm{e}^{-A_Q T})^{-1}
\|_{H_2}\leq (1-\mathrm{e}^{-\mu T})^{-1}.
\end{displaymath}
 \end{proof}

We can now prove our  main theorem.

\begin{theorem}\label{Main}
  For each $\alpha$ with $0\le\alpha<1$ there exists a constant
  $K_\alpha$, depending only on $\alpha$, such that if $A$ is a
  self-adjoint sectorial operator on a Hilbert space with non-negative
  spectrum, then if
  $$
  \|f(u)-f(v)\|\le L\|A^\alpha(u-v)\|\qquad\mbox{for all}\qquad u,v\in D(A^\alpha)
  $$
  any periodic orbit of
   $$
   u_t+Au=f(u)
   $$
   must have period at least $K_\alpha L^{-1/(1-\alpha)}$.
\end{theorem}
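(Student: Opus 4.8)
The plan is to mimic the Hilbert-space ODE proof, but split the periodic orbit into its low- and high-frequency components using the spectral projections $P = P_{[0,\mu)}$ and $Q = P_{[\mu,\infty)}$ from Lemma~\ref{lemma:spectralDecomp}, for a threshold $\mu$ to be chosen at the end. For a periodic orbit $u(t)$ of period $T$, and any shift $h>0$, set $w(t) = u(t+h) - u(t)$, which is again $T$-periodic with $\int_0^T w\,\d t = 0$. The aim is to control $\int_0^T \|A^\alpha w\|^2\,\d t$, since the Lipschitz hypothesis gives $\|f(u(t+h))-f(u(t))\| \le L\|A^\alpha w(t)\|$, and then to close an inequality that forces $T$ to be bounded below.

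Let me think about the two pieces separately. On the low-frequency part $Pw$, the operator $A^\alpha$ is bounded by $\mu^\alpha$ (since $\|A_P\|\le\mu$ gives $\|A_P^\alpha\|\le\mu^\alpha$), so $\|A^\alpha Pw\| \le \mu^\alpha\|Pw\|$, and Wirtinger's inequality (Lemma~\ref{Wirtinger}), rescaled to the interval $(0,T)$, bounds $\int_0^T\|Pw\|^2\,\d t$ by $(T/2\pi)^2\int_0^T\|P\dot w\|^2\,\d t = (T/2\pi)^2\int_0^T\|P(f(u(t+h))-f(u(t)))\|^2\,\d t$. The high-frequency part is handled differently, and this is where the compactness-free spectral setup matters: on $QH$ the equation $\dot w = -A_Q w + Q(f(u(t+h))-f(u(t)))$ is solved by the variation-of-constants formula, and using periodicity one inverts $(I - \mathrm{e}^{-A_Q T})$ — whose norm is controlled by $(1-\mathrm{e}^{-\mu T})^{-1}$ from Lemma~\ref{lemma:spectralDecomp} — to express $Qw$ in terms of the forcing. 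The crucial gain here is that $A^\alpha\mathrm{e}^{-A_Q s}$ applied to high modes contributes a factor like $s^{-\alpha}$ that is integrable for $\alpha<1$, which is precisely what lets the argument reach the full range $0\le\alpha<1$.

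**The main obstacle** I anticipate is estimating the high-frequency term cleanly, since one must bound $\int_0^T\|A^\alpha Qw\|^2\,\d t$ by $\int_0^T\|Qg\|^2\,\d t$ (writing $g(t)=f(u(t+h))-f(u(t))$) with an explicit constant that behaves correctly in $\mu$ and $T$. The natural route is to take $L^2(0,T;QH)$ norms in the variation-of-constants representation and use Young's convolution inequality together with the bound $\|A_Q^\alpha\mathrm{e}^{-A_Q s}\|\le C_\alpha s^{-\alpha}$ (valid for self-adjoint $A_Q$ with spectrum in $[\mu,\infty)$, via $\sup_{\lambda\ge\mu}\lambda^\alpha\mathrm{e}^{-\lambda s}\le(\alpha/s)^\alpha\mathrm{e}^{-\alpha}$). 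One must handle the periodic Green's function carefully so that the resulting $L^1$-norm of the convolution kernel is finite and yields a constant of the form $C_\alpha\,\mu^{\alpha-1}$ times a bounded function of $\mu T$.

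**Combining the two estimates**, I would add them to obtain, with $g(t)=f(u(t+h))-f(u(t))$,
\begin{equation}
\int_0^T\|A^\alpha w\|^2\,\d t \le \left[\left(\frac{T}{2\pi}\right)^2\mu^{2\alpha} + C_\alpha^2\,\mu^{2(\alpha-1)}\Phi(\mu T)\right]\int_0^T\|g\|^2\,\d t,
\end{equation}
where $\Phi$ collects the bounded contribution from the high-frequency resolvent factor, and where I have used $\|Pg\|^2+\|Qg\|^2=\|g\|^2$ together with $\|A^\alpha w\|^2=\|A^\alpha Pw\|^2+\|A^\alpha Qw\|^2$. Since $\int_0^T\|g\|^2\,\d t \le L^2\int_0^T\|A^\alpha w\|^2\,\d t$ by the Lipschitz bound, a nontrivial orbit (for which $\int_0^T\|A^\alpha w\|^2\,\d t>0$ for suitable $h$) forces the bracketed quantity times $L^2$ to be at least $1$. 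Finally I would optimise: choosing $\mu T$ equal to a fixed constant reduces both terms to multiples of $L^2 T^{2(1-\alpha)}$ (using $\mu\sim c/T$), so the inequality becomes $c'_\alpha L^2 T^{2(1-\alpha)}\ge 1$, i.e. $T\ge K_\alpha L^{-1/(1-\alpha)}$, with $K_\alpha$ depending only on $\alpha$ as claimed.
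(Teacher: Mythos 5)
Your overall strategy is sound, but as written it contains one genuinely false step that must be repaired. In the low-frequency estimate you assert
$\int_0^T\|P\dot w\|^2\,\mathrm{d}t=\int_0^T\|P(f(u(t+h))-f(u(t)))\|^2\,\mathrm{d}t$.
That identity is only valid for the ODE $\dot x=f(x)$; here $\dot w=-Aw+g$ with $g(t)=f(u(t+h))-f(u(t))$, so $P\dot w=-A_PPw+Pg$, and you have dropped the term $-A_PPw$. The cure is exactly the absorption that the paper performs on its analogous term: since $\|A_P\|\le\mu$, Wirtinger's inequality gives
\begin{equation*}
\Bigl(\int_0^T\|Pw\|^2\,\mathrm{d}t\Bigr)^{1/2}\le\frac{T}{2\pi}\Bigl[\mu\Bigl(\int_0^T\|Pw\|^2\,\mathrm{d}t\Bigr)^{1/2}+\Bigl(\int_0^T\|Pg\|^2\,\mathrm{d}t\Bigr)^{1/2}\Bigr],
\end{equation*}
and because you choose $\mu T$ equal to a fixed constant (take it less than $2\pi$, e.g.\ $\mu T=1$), the first term on the right can be absorbed into the left-hand side at the cost of an absolute constant. (Compare the paper, where the corresponding term involving $\|AP\|\,\|A^\alpha PD\|$ is absorbed using $2\mu T=2\delta<1$.) With this repair, your combined inequality and the final optimisation over $\mu$ do give $T\ge K_\alpha L^{-1/(1-\alpha)}$ as claimed.

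It is worth recording that your route, so repaired, differs genuinely from the paper's in its analytic machinery, and in a way that is arguably cleaner. The paper never uses Wirtinger's inequality in the PDE setting: for the low modes it uses the averaging identity $Tp(t)=\int_0^T\int_s^t\dot p(r)\,\mathrm{d}r\,\mathrm{d}s$, and it runs the entire argument in $L^q(0,T)$ with $q>1/(1-\alpha)$, because its high-mode estimate controls the convolution against the singular kernel $(T-s)^{-\alpha}$ by H\"older's inequality with exponents $(p,q)$, $\alpha p<1$, finally letting $p\to1$, $q\to\infty$. The large $q$ is forced by H\"older, which needs the kernel in $L^p$; note that $s^{-\alpha}\in L^2$ only for $\alpha<1/2$, which is precisely the restriction of the 2006 paper. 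Your replacement of H\"older by Young's convolution inequality on the circle, $\|k*\phi\|_{L^2}\le\|k\|_{L^1}\|\phi\|_{L^2}$, needs only $s^{-\alpha}\in L^1$, i.e.\ all $\alpha<1$, so you can remain in $L^2$ throughout, exploit the orthogonality $\|A^\alpha w\|^2=\|A^\alpha Pw\|^2+\|A^\alpha Qw\|^2$, reuse Lemma \ref{Wirtinger}, and avoid the limiting argument $p\to1$ entirely. Both proofs rest on the same two pillars --- the spectral splitting of Lemma \ref{lemma:spectralDecomp} with threshold $\mu\sim1/T$, and inversion of $I-\mathrm{e}^{-A_QT}$ via variation of constants on the high modes --- so this is a difference of technique rather than of substance, but your $L^2$/Young version reaches the full range $0\le\alpha<1$ with noticeably less bookkeeping (the constants $K_\alpha$ will differ, which is immaterial).
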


Throughout the proof we use $\|\cdot\|$ for both the norm in $X$ and the operator norm in ${\mathscr L}(X,X)$.

\begin{proof}

  Suppose that (\ref{eq:PDE}) has a periodic orbit of minimal period
  $T>0$. Pick some $\tau$ with $0<\tau<T$, and let
  $D(t):=u(t)-u(t+\tau)$.

  Fix some $0<\delta<1/2$ and set $\mu = \delta/T$.   Since $A$ is self-adjoint, we can use Lemma \ref{lemma:spectralDecomp} to guarantee the existence of projections $P = P_{(-\infty,\mu)}$ and
  $Q=P_{[\mu, +\infty)}$ which are orthogonal to each other. In
  particular, $\|P\|\leq 1$ and $\|Q\|\leq 1$. Notice also that, since
  we are assuming that the spectrum of $A$ is contained in the
  nonnegative half-line, $P_\lambda = 0$ for all $\lambda <0$, that
  is, $P_\Omega =0$ for any $\Omega \subset (-\infty,0)$.
Moreover, $H = P H
 \oplus Q H$ whith $P H$ and $QH$ invariant for $A_P = AP =
 PA$ and $A_Q = A Q = Q A$. Furthermore, by Lemma \ref{lemma:spectralDecomp},
 \begin{displaymath}
   \|A_P\|_{PH} \leq \mu
\quad \mathrm{and} \quad
   \|(I-\mathrm{e}^{-A_Q T})^{-1}\|_{QH} \leq (1-\mathrm{e}^{-\mu
     T})^{-1}.
\end{displaymath}

First, write $p(t)=P D(t)$ and note following the proof of Theorem 2.1
in Robinson \& Vidal-L\'opez (2006) that
$$
p(t)-p(s)=\int_s^t\dot p(r)\,\d r.
$$
Integrating both sides with respect to $s$ from $0$ to $T$ gives
$$
Tp(t)=\int_0^T\left(\int_s^t\dot p(r)\,\d r\right)\,\d s
$$
and so
$$
 T\|A^\alpha p(t)\|\le\int_0^T\int_0^T\|A^\alpha\dot p(r)\|\,\d r\,\d s\le T\int_0^T\|A^\alpha\dot p(r)\|\,\d r.
 $$
Therefore
\begin{align*}
\|A^\alpha &PD(t)\|\le \int_0^T\|A^\alpha P[f(u(t+s))-f(u(t+\tau+s))]\|+\|A^\alpha PAD(t+s)\|\,\d s\\
&\le\int_0^T\|A^\alpha P\|\|f(u(t+s))-f(u(t+\tau+s))\|+\|AP\|\|A^\alpha PD(t+s)\|\,\d s\\
&\le\int_0^T \mu^\alpha\|f(u(s))-f(u(s-\tau))\|+\mu\|A^\alpha PD(s)\|\,\d s\\
&\le L\mu^\alpha\int_0^T\|A^\alpha D(s)\|\,\d s+\mu\int_0^T\|A^\alpha PD(s)\|\,\d s.
\end{align*}
To combine this with the $Q$ part we will need to raise everything to
the power of $q$ for some $q>1/(1-\alpha)$:
\begin{align*}
 \|A^\alpha PD(t)\|^q\le 2^qL^q\mu^{\alpha q}\left(\int_0^T\|A^\alpha D(s)\|\,\d s\right)^q+2^q\mu^q\left(\int_0^T\|A^\alpha PD(s)\|\,\d s\right)^q\\
\quad\le 2^qL^q\mu^{\alpha q}T^{q-1}\left(\int_0^T\|A^\alpha D(s)\|^q\,\d s\right)+2^q\mu^qT^{q-1}\int_0^T\|A^\alpha PD(s)\|^q\,\d s
\end{align*}
Integrating from $0$ to $T$ with respect to $t$ we obtain
\begin{align*}
\int_0^T\|A^\alpha &PD(t)\|^q\,\d t \le 2^q\mu^{\alpha q}T^qL^q\int_0^T\|A^\alpha D(s)\|^q\,\d s+2^q\mu^qT^q\int_0^T\|A^\alpha PD(s)\|^q\,\d s\\
&=2^q \delta^{\alpha q}T^{q(1-\alpha)}L^q\int_0^T\|A^\alpha
D(s)\|^q\,\d s+(2\delta)^q\int_0^T\|A^\alpha PD(s)\|^q\,\d s\\
&\le 2^{(1-2\alpha)q} \delta^{\alpha q} T^{q(1-\alpha)}L^q
\int_0^T\|A^\alpha D(s)\|^q\,\d s+(2\delta)^{q}\int_0^T\|A^\alpha PD(s)\|^q\,\d s,
\end{align*}
using the fact that $2\mu T=2\delta<1$ by our choice of $\mu$. Therefore
$$
\int_0^T\|A^\alpha PD(t)\|^q\,\d t\le \frac{2^{(1-2\alpha)q}}{1-(2\delta)^q}T^{q(1-\alpha)}L^q\int_0^T\|A^\alpha D(s)\|^q\,\d s.
$$
or \be{Pest2} \left(\int_0^T\|A^\alpha PD(t)\|^q\,\d t\right)^{1/q}\le
\frac{2^{(1-2\alpha)}}{(1-(2\delta)^q)^{1/q}} T^{1-\alpha}\left(\int_0^T\|A^\alpha
  D(s)\|^q\,\d s\right)^{1/q}.  \ee

For the $Q$ part, we note (following the first steps of the proof of Theorem 3.1 in Vidal-L\'opez \& Robinson, 2006)  that on a periodic orbit of period $T$
$$
u(t)=u(t+T)=\e^{-AT}u(t)+\int_0^T\e^{-A(T-s)}f(u(s+t))\,\d s,
$$
whence
$$
(I-\e^{-AT})u(t)=\int_0^T\e^{-A(T-s)}f(u(s+t))\,\d s.
$$
Therefore
\begin{align*}
D(t)&=(I-\e^{-AT})^{-1}\int_0^T\e^{-A(T-s)}[f(u(t+s))-f(u(t+\tau+s))]\,\d s\\
&=(I-\e^{-AT})^{-1}\int_0^T\e^{-A(T-s)}F(t+s)\,\d s,
\end{align*}
where $F(t):=f(u(t))-f(u(t+\tau))$, and so
\begin{equation}
  \label{eq:bounds:1}
  \|A^\alpha QD(t)\|\leq
\|(I - \mathrm{e}^{-AT})^{-1}Q\| \int_0^T\|A^\alpha\mathrm{e}^{-A(T-s)} \|
\|F(t+s)\|\,\mathrm{d}s.
\end{equation}

Now,  from Lemma 3.1
\begin{displaymath}
  \|( I - \mathrm{e}^{-A T})^{-1}\| \leq (1-\mathrm{e}^{-\mu T})^{-1} < \gamma,
\end{displaymath}
and since $\mu T = \delta <1/2$ it follows that $\gamma =
(1-\e^{-1/2})^{-1}\simeq 2.541$.

Using the inequality
$$
\|A^\alpha\e^{-At}\|\le M_\alpha t^{-\alpha}
$$
(where $M_\alpha=\alpha^\alpha\e^{-\alpha}$) we have
\begin{displaymath}
  \|A^\alpha QD(t)\|\leq \gamma \int_0^T M_\alpha (T-s)^{-\alpha}
L \|A^\alpha D(t+s)\|\,\mathrm{d}s.
\end{displaymath}

Now, we can apply H\"older's inequality to the integral term, with exponents $(p,q)$ where $\alpha p <1$, and obtain
\begin{displaymath}
   \|A^\alpha QD(t)\|\leq \gamma M_\alpha L\left(\int_0^T  (T-s)^{-\alpha p}
\,\mathrm{d}s\right)^{1/p}
\left(\int_0^T
\|A^\alpha D(t+s)\|^q\,\mathrm{d}s\right)^{1/q}
\end{displaymath}
Thus,
\begin{displaymath}
   \|A^\alpha QD(t)\|^q \leq
\left(\gamma M_\alpha L\right)^q
\left(\int_0^T  s^{-\alpha p}
\,\mathrm{d}s\right)^{q/p}
\int_0^T  \|A^\alpha D(t+s)\|^q\,\mathrm{d}s
\end{displaymath}
and so, noting that $q(1-\alpha p)/p=q(1-\alpha)-1$,
\begin{equation}\label{Qest1}
\|A^\alpha QD(t)\|^q\le \frac{(\gamma M_\alpha L)^q}{(1-\alpha p)^{q/p}}\,T^{q(1-\alpha)-1}\int_0^T\|A^\alpha D(s)\|^q\,\d s.
\end{equation}
Integrating from $0$ to $T$ we obtain
\begin{equation}\label{Qest2}
\int_0^T\|A^\alpha QD(t)\|^q\le\frac{(\gamma M_\alpha L)^q}{(1-\alpha p)^{q/p}}\,T^{q(1-\alpha)}\int_0^T\|A^\alpha D(s)\|^q\,\d s,
\end{equation}
or
\begin{equation}\label{Qest3}
\left(\int_0^T\|A^\alpha QD(t)\|^q\right)^{1/q}\le\frac{\gamma M_\alpha L}{(1-\alpha p)^{1/p}}\,T^{1-\alpha}\left(\int_0^T\|A^\alpha D(s)\|^q\,\d s\right)^{1/q}.
\end{equation}

Now combine (\ref{Pest2}) and (\ref{Qest3}) using the triangle inequality in $L^q$ to obtain
$$
\left(\int_0^T\|A^\alpha D(t)\|^q\,\d t\right)^{1/q}\le \left[\frac{2^{1-2\alpha}}{(1-(2\delta)^q)^{1/q}}+\frac{\gamma M_\alpha}{(1-\alpha p)^{1/p}}\right]LT^{1-\alpha}\left(\int_0^T\|A^\alpha D(s)\|^q\,\d s\right)^{1/q},
$$
which yields
$$
LT^{1-\alpha}\left[\frac{2^{1-2\alpha}}{(1-(2\delta)^q)^{1/q}}+\frac{\gamma M_\alpha}{(1-\alpha p)^{1/p}}\right]\ge 1.
$$

Finally letting $p\to1$ (and so $q\to\infty$) we obtain
$$
LT^{1-\alpha}\left[2^{1-2\alpha}+\frac{\gamma M_\alpha}{1-\alpha}\right]\ge 1,
$$
and so, recalling that $M_\alpha=\alpha^\alpha\e^{-\alpha}$,
$$
T\ge L^{-1/(1-\alpha)}\left[2^{1-2\alpha}+\frac{\gamma\alpha^\alpha\e^{-\alpha}}{1-\alpha}\right]^{-1/(1-\alpha)}.
$$
\end{proof}

Notice that the proof above only uses the fact that the spectrum of the operator
$A$ has a suitable decomposition, i.e.\ that given by Lemma
\ref{lemma:spectralDecomp}. In fact, the proof also works when $X$ is a Banach space and $A:D(A)\to X$ a sectorial operator for which there
exists a sequence of uniformly bounded projections $\{P_n\}_{n=1}^\infty$ that commute with all
powers of $A$, and an increasing sequence of positive real numbers
$\mu_n\to\infty$ such that
\be{split}
 \|AP_n\|_{{\mathscr  L}(X,X)}\le\mu_n
\qquad\mbox{and}\qquad\|AQ_n\|_{{\mathscr L}(X,X)}\ge\mu_n,
\ee
where $Q_n:=I-P_n$, and
\be{decay} \|A^\alpha\e^{-At}Q_n\|_{{\mathscr L}(X,X)}\le
M_\alpha t^{-\alpha}\e^{-\mu_n t},\qquad t\ge0,\ 0\le\alpha\le1.
\ee

\section{Some applications}\label{applications}

We now consider briefly some applications of this result.

\subsection{Reaction-diffusion equations}

While the system
$$
u_t-\Delta u=f(u,u_x)\qquad u|_{\partial\Omega}=0,\qquad\Omega\subset\R^n.
$$
is always gradient - and hence has no periodic orbits - if $f$ depends only on $u$, the introduction of dependence on $u_x$ means that this is no longer true. Note that if we consider the Nemytskii operator $F$ acting on functions $u$ and defined by
$$
F[u](x)=f(u(x),u_x(x)),
$$
smoothness properties of $F$ can be deduced from growth conditions on $f$. For
example, if
$$
|f(t,x) - f(s,y)| \leq C(1+ |t-s|^{p-1} + |x-y|^{q-1})(|t-s| + |x-y|)
$$
then $F:D(A^\alpha)\rightarrow H$, with $\alpha
=\max\left\{\frac{n(p-1)}{4},\frac{1}{2}+\frac{n(q-1)}{4}\right\}>\frac{1}{2}$. Thus
we can now treat this case, if $1<p< 1 + 4/n$ and $1<q<1+2/n$, that we were
unable to before.

We can also now consider the same problem set on the whole space,
\begin{displaymath}
  u_t - \Delta u = f(u,u_x), \quad \mathrm{in}\quad \R^n.
\end{displaymath}
We could not treat this before, since it is well-known that the spectrum of
$-\Delta$ is the half-line $[0,\infty)$ (e.g, see Reed \& Simon Vol 4, Example
XIII.4.6, p.117 for $L^2(\R^3)$ and Theorem XIII.15 (b) p. 119 for the general
case). However, such a system now falls within the framework of Theorem
\ref{Main}.

\subsection{Lotka--Volterra equations}

We take $\Omega\subset \R^N$ and consider the Lotka--Volterra system
$$  U_t - D \Delta U_t=F(U)\quad \mathrm{in}\ \Omega,\qquad
   \frac{\partial U}{\partial  n }= 0 \quad \mathrm{on}\ \partial \Omega,
$$
where $U = (u,v) ^\mathrm{T}$, $D = \mathrm{diag}(d_1,d_2)$, and
\begin{displaymath}
  F(U) =  \left(
    \begin{array}[]{l}
u(\lambda(x) - a(x) u - b(x) v)\\
 v(\mu(x) - c(x) u - d(x) v),
\end{array}
\right)
\end{displaymath}
with $a,b,c,d\in L^\infty(\Omega)$. The problem is well-posed in $\dot
H^\alpha(\Omega)\times \dot H^\alpha(\Omega)$ with $\alpha=N/2$ for $1\leq N\leq 3$, where $\dot
H^\alpha(\Omega)$ denotes the usual Sobolev space with elements having zero
mean. Notice that $F$ maps $\dot H^\alpha(\Omega)\times \dot H^\alpha(\Omega)$
into $L^2(\Omega) \times L^2(\Omega)$. Also, for $U,V\in B_R^\alpha$, the ball
of radius $R$ in $\dot H^\alpha(\Omega)$,
\begin{displaymath}
  \|F(U) - F(Y)\|^2_{L^2(\Omega)} \leq |L(R)|^2\|U-Y\|^2_{\dot H^\alpha(\Omega)}
\end{displaymath}
where
$$
|L(R)|^2=\max\{B_1(R),B_2(R)\}
$$
with
\begin{displaymath}
  B_1 (R)= 2 \left[\|\lambda\|^2_{\infty} +
C^4_\alpha R^2 (2\|a\|_\infty^2 + \|b\|_{\infty}^2 +
    \|c\|_{\infty}^2) \right]
\end{displaymath}
and
\begin{displaymath}
  B_2 (R)=2 \left[\|\mu\|^2_{\infty} +
C^4_\alpha R^2 (2\|d\|_\infty^2 + \|b\|_{\infty}^2 +
    \|c\|_{\infty}^2) \right],
\end{displaymath}
where $C_\alpha$ is the embedding constant in $\dot H^\alpha(\Omega)\subset
L^4(\Omega)$. Notice that $L(R)$ is increasing in $R$. It follows from
Theorem \ref{Main} that
\begin{displaymath}
  T^{1-\alpha} >\frac{1}{L(R)C_\alpha}
\end{displaymath}

Notice that, if the periodic solution $U(\cdot)$ is bounded in
$L^\infty(\Omega)$, we can consider the nonlinear term acting on the orbit as a
function $F: L^2 \to L^2$ which is Lipschitz on the periodic orbit with constant
$L=\max\{B_1,B_2\}$ with
\begin{displaymath}
  B_1= 2 \mathop{\mathrm{ess\ sup}\ }_{x\in\Omega}\{|\lambda(x)|^2 + M^2(2|a(x)|^2 + |b(x)|^2 +
  |c(x)|^2) \}
\end{displaymath}
and
\begin{displaymath}
  B_2 = 2 \mathop{\mathrm{ess\ sup}\ }_{x\in\Omega}\{|\mu(x)|^2 + M^2(2|d(x)|^2 + |b(x)|^2 +
  |c(x)|^2) \}
\end{displaymath}
where $M = \|U(\cdot)\|_{L^\infty(0,T;L^\infty(\Omega))}$. In particular, the
bound for the period in this case is given by
\begin{displaymath}
  T > c/L.
\end{displaymath}

\subsection{The 2D Navier--Stokes equations}

Finally we revisit the 2D incompressible Navier--Stokes equations,
$$
u_t-\Delta u+(u\cdot\nabla)u+\nabla p=f\qquad \nabla\cdot u=0,
$$
under periodic boundary conditions, which formed the main example in our previous paper.

Here we give a much simpler argument to obtain the same result, recalling that
$$
\|u\|_{L^p}\le cp^{1/2}\|Du\|\qquad\mbox{and}\qquad\|u\|_{L^\infty}\le c\epsilon^{-1}\|u\|_{H^{1+\epsilon}},
$$
(see Talenti (1976) and Bartuccelli \& Gibbon (2011), respectively).

We let $\Pi$ denote the orthogonal projector in $L^2$ onto divergence-free fields (`the Leray projector'), and define
$$
B(u,u)=\Pi[(u\cdot\nabla)u]\qquad\mbox{and}\qquad Au=-\Pi\Delta,
$$
enabling us to rewrite the governing equations as
$$
u_t+Au=B(u,u),
$$

Using the bilinearity of $B(u,u)$ we have
\begin{align*}
|B(u,u)-B(v,v)|&\le |B(u-v,u)|+|B(v,u-v)|\\
&\le \|u-v\|_{L^{2/\epsilon}}\|Du\|_{L^{2/(1-\epsilon)}}+\|v\|_{L^\infty}\|D(u-v)\|_{L^2}\\
&\le c(2/\epsilon)^{1/2}\|D(u-v)\|\|Du\|_{H^{1+*}}+c\epsilon^{-1/2}\|v\|_{H^{1+*}}\|D(u-v)\|_{L^2}\\
&=c\epsilon^{-1/2}\|D(u-v)\|[\|Du\|_{H^{1+\epsilon}}+\|Dv\|_{H^{1+\epsilon}}]\\
&\le c\epsilon^{-1/2}\|D(u-v)\|G^{1-\epsilon}G^{3\epsilon}.
\end{align*}
Minimising with respect to $\epsilon$ yields
$$
|B(u,u)-B(v,v)|\le cG(1+\log G)^{1/2}\|D(u-v)\|,
$$
and hence $T\ge cG^{-2}(1+\log G)^{-1}$ as before.

\end{document}